\def\1{\underline{1}}
\def\R{\Bbb R}
\def\Z{\mathbb Z}
\def\C{\mathbb C}
\newtheorem{theorem}{Theorem}
\newtheorem{proposition}{Proposition}
\newenvironment{definition}
{\smallskip\noindent{\bf Definition\/}:}{\smallskip\par}
\newenvironment{remark}
{\smallskip\noindent{\bf Remark\/}.}{\smallskip\par}
\newenvironment{proof}
{\noindent{\bf Proof\/}.}{{ $\Box$}\smallskip\par}
\title{On an equivariant analogue of the monodromy zeta function
\footnote{Math. Subject Class. 32S05, 57R91, 58K10. 
Keywords: finite group actions, zeta function of a map, monodromy.
} }
\author{S.M.~Gusein-Zade
\thanks{
Partially supported by the the Russian government grant 11.G34.31.0005, RFBR--10-01-00678,
NSh--4850.2012.1 and Simons--IUM fellowship.
Address: 
Moscow State University, Faculty of Mechanics and Mathematics,
GSP-1, Leninskie Gory, Moscow, 119991, Russia.
E-mail: sabir\symbol{'100}mccme.ru
}
}
\date{}
\begin{document}
\def\eps{\varepsilon}

\maketitle

\begin{abstract}
We offer an equivariant analogue of the monodromy zeta function of a germ
invariant with respect to an action of finite group $G$ as an element
of the Grothendieck ring of finite $(\Z\times G)$-sets. We formulate 
equivariant analogues of the Sebastiani--Thom theorem and of the A'Campo formula.
\end{abstract}

This paper (in a slightly shorter form) was essentially written in February 2008,
when it was lost with an external hard disk in Madrid.
For a rather long time there was no mood to restore it.
However recently some ideas from it were used, in particular, in \cite{RMC}, \cite{EG}.
This urged to return to the work.
The most part of the paper was written when the author used the hospitality
of the Max Plank Institute for Mathematics in Bonn.

A number of invariants of singularities of analytic spaces and of germ of
analytic functions on them have equivariant versions for singularities
with an action of a finite group $G$ and for $G$-invariant (or $G$-equivariant)
function germs on them. For example, in \cite{Wall}, as an equivariant analogue
of the Milnor number of a $G$-invariant function germ on the space $(\C^n,0)$ (with
an action of the group $G$) one considers a certain element of the ring $R(G)$
of (virtual) representations of the group $G$.
An important invariant of a germ of an analytic function on a germ of an analytic space
(in general, singular) is the classical monodromy zeta function of the germ.

Let $f:(V,0)\to (\C,0)$ be a germ of a complex analytic function on a germ
$(V,0)\subset(\C^N,0)$ of an analytic space (singular, in general). For a small enough
$\delta>0$ the restriction of the function $f$ to the intersection $V\cap B_\delta(0)$
of the space $V$ with the (closed) ball $B_\delta(0)$ of radius $\delta$ with the centre
at the origin in $\C^N$ is a locally trivial fibration over a punctured neighbourhood
of zero in $\C$ ({\em the Milnor fibration} of the germ $f$; see \cite{Le}). The fibre
$M_f=f^{-1}(\eps)\cap B_\delta(0)$ of this fibration ($0<\vert\eps\vert<<\delta$ are small enough)
is called {\em the Milnor fibre} of the germ $f$. A monodromy transformation $\Gamma_f:M_f\to M_f$
of the Milnor fibration of the germ $f$ corresponding to the loop $\eps\cdot\exp{(2\pi i\tau)}$
($\tau\in[0,1]$) is called {\em a (classical) monodromy transformation} of the germ $f$.
(It is well defined up to isotopy.)
The action of a monodromy transformation in the (co)homology groups of the Milnor fibre
is called {\em the (classical) monodromy operator} of the germ $f$.
The zeta function of a transformation $\varphi:X\to X$ (of a good enough space $X$, say,
of a union of cells in a finite CW-complex or of a real semi-analytic space) is the rational
function
$$
\zeta_\varphi(t):=
\prod\limits_{q\ge0}\left\{\det\left(id-t\varphi^*_{\vert H^q_c(X;\R)}\right)\right\}^{(-1)^q}\,,
$$
where $\varphi^*$ is the action of the transformation $\varphi$ in the cohomology groups
$H^q_c(X;\R)$ with compact support. The degree of the zeta function $\zeta_\varphi(t)$
(the degree of the numerator minus the degree of the denominator) is equal to the Euler
characteristic $\chi(X)$ of the space $X$ (also defined through the cohomologies with compact
support). The classical monodromy zeta function $\zeta_f(t):=\zeta_{\Gamma_f}(t)$ of a germ $f$
can be represented in the form $\prod\limits_{m\ge1} (1-t^m)^{s_m}$, where $s_m$ are integers,
finitely many of which are different from zero. This follows, for example, from an appropriate
version of the A'Campo formula \cite{A'Campo}.

\begin{remark}
 The use of the cohomology groups with compact support in the definitions is connected with the fact
 that in this case the Euler characteristic possesses the property of additivity with respect to
 a partition into non-intersecting subspaces, and the zeta function of a transformation possesses
 the corresponding property of multiplicativity, an equivariant version of which is formulated below
 in Proposition~\ref{multi}.
\end{remark}

The zeta function $\zeta_\varphi(t)$ of a transformation $\varphi:X\to X$ can be expressed through the
Lefschetz numbers $L(\varphi^k)$ of the powers $\varphi^k$ of the transformation $\varphi$, $k\ge 1$. 
Let us define integers $r_i$, $i=1, 2\ldots,$ by the recurrent equations
\begin{equation}\label{lefsc}
L(\varphi^m)=\sum_{i|m}r_i\,.
\end{equation}
The number $r_m$ ``counts'' the points $x\in X$, whose $\varphi$-orbits have order $m$
(i.e., $\varphi^m(x)=x$,  $\varphi^{i}(x)\ne x$ for $0<i<m$). Therefore $r_m$ is divisible by $m$:
$r_m=ms_m$. Then
\begin{equation*}
\zeta_\varphi(t)=\prod_{m\geq 1} (1-t^m)^{s_m}.
\end{equation*}
(In this equation for the zeta function of an arbitrary transformation, in contrast to the formula
for the zeta function of a classical monodromy transformation, one may have infinitely many
exponents $s_m$ different from zero.

Assume that, on the germ $(V,0)$, one has an (analytic) action of a finite group $G$ and that the
germ $f$ is invariant with respect to this action: $f(gx)=f(x)$ for any $g\in G$.
The Milnor fibre $M_f$ of the germ $f$ is a $G$-space, i.e. a space with an action of the group $G$,
the cohomology groups $H^q_c(M_f)$ are spaces of $G$-representations ($G$-modules). Let us denote by
$[H^q(M_f)]$ the class of the $G$-module $H^q(M_f)$ in the ring $R(G)$ of (virtual) representations
of the group $G$. The equivariant Euler characteristic of the Milnor fibre $M_f$ of the germ $f$
is usually defined as $\sum\limits_{q}(-1)^q[H^q(M_f)]\in R(G)$
(see, e.g., \cite{Wall}). In this setting a natural generalization of the notion of the monodromy
zeta function of a germ to the equivariant case is absent. We offer a possible generalization and
describe some of its properties.

First, let us somewhat modify the definition of the (equivariant) Euler characteristic.
For convenience we will consider it on the category of real semi-analytic spaces. (The Milnor fibre,
of course,
is such a space.) The Grothendieck group $K_0(\Omega)$ of an algebra $\Omega$ of spaces or of sets
(possibly with additional structures) is the Abelian group generated by the isomorphism classes $[X]$
of the spaces or of the sets from $\Omega$ modulo the relations $[X]=[Y]+[X\setminus Y]$ for
$Y\subset X$. Usually the Grothendieck group of an algebra of spaces or of sets is a ring with the
multiplication defined by the Cartesian product. The Euler characteristic may be regarded as a
homomorphism from the Grothendieck ring of real semi-analytic spaces (or of sets which are unions of
cells in finite CW-complexes) into the ring $\Z$ of integers. In its turn, the ring $\Z$
in a natural way can be identified with the Grothendieck ring $K_0(\mbox{f.s.})$ of finite sets.

In a similar way the equivariant Euler characteristic can be defined as a homomorphism from 
the Grothendieck ring of real semi-analytic spaces with actions of a (finite) group $G$ to
the Grothendieck ring $K_0(\mbox{f.}G-\mbox{s.})$ of finite $G$-sets, called also the Burnside ring
of the group $G$: see, e.g., \cite{TtD}, \cite{LR}. (A finite $G$ set is a finite set with an action
of the group $G$. The Grothendieck ring $K_0(\mbox{f.}G-\mbox{s.})$, as a free Abelian group, is
generated by irreducible finite $G$ sets. Such $G$-sets are in one-to-one correspondence with the
conjugacy classes ${\mathfrak{H}}\in consub(G)$ of subgroups of the group $G$: to a subgroup
$H\subset G$ ($H\in {\mathfrak{H}}$) there corresponds the class of the $G$-set $G/H$.) If $X$ is a
real semi-analytic space with an action of a finite group $G$, then there exists a representation of
$X$ as a union of cells in a finite CW-complex (and even of simplices in a finite simplicial complex)
such that the action of the elements of the group $G$ is coordinated with the partition of $X$
into cells and, moreover, if $g\sigma=\sigma$ for a cell $\sigma$, then $g_{\vert\sigma}=id$.
The set $C_n$ of cells of dimension $n$ is a finite $G$-set. The equivariant Euler characteristic
$\chi_G(X)$ of the $G$-space $X$ can be defined as the alternating sum
$\sum\limits_{n\ge 0}(-1)^n [C_n]\in K_0(f.G-s.)$ of the classes of the $G$-sets $C_n$.
An equivalent definition of the equivariant Euler characteristic $\chi_G(X)$ (which, in particular,
shows that it is well defined) is the following one. For a conjugacy class ${\mathfrak{H}}\in consub(G)$
of subgroups of the group $G$, let $X^{({\mathfrak{H}})} := \{x \in X : G_x \in {\mathfrak{H}}\}$,
where $G_x :=\{g \in G : gx = x\}$ is the isotropy subgroup of the point $x$. Then
\begin{equation}\label{chi_G}
\chi_G(X) = 
\sum\limits_{\mathfrak{H}} \chi(X^{(\mathfrak{H})}/G)[G/H] =
\sum\limits_{\mathfrak{H}}(\chi(X^{(\mathfrak{H})})|H|/|G|)[G/H]\,,
\end{equation}
where the summation is over all the classes ${\mathfrak{H}}\in consub(G)$,
$H$ is a representative of the conjugacy class ${\mathfrak{H}}$.

Let $K_0(\mbox{perm.})$ be the Grothendieck ring of permutations on finite sets.
It can be regarded as the Grothendieck ring of finite $\Z$-sets. 
As an Abelian group, $K_0(perm.)$ is the free Abelian group generated by the cyclic permutations.
For a permutation, as a map from a finite set (zero-dimensional topological space) into itself,
there is defined its zeta-function. The zeta function of the cyclic permutation on $m$ elements
is equal to $(1-t^m)$. To a virtual permutation $(A_1,\sigma_1)-(A_2,\sigma_2)$, let us associate
its zeta function defined as the ratio $\zeta_{\sigma_1}(t)/\zeta_{\sigma_2}(t)$ of the zeta functions
of the permutations $\sigma_i$. It is not difficult to see that this way one defines a one-to-one
map from the Grothendieck ring $K_0(perm.)$ onto the set of rational functions of the form
$\prod (1-t^m)^{s_m}$ with a finite number of the exponents $s_m$ different from zero.
(This defines a structure of a ring on the set of the latter ones. Moreover the ``sum'' of two
functions of the indicated form is their product. This should be kept in mind for understanding
why Proposition~\ref{multi} is mentioned above as an equivariant analogue of the multiplicativity
property of the zeta function of a map.) Thus the zeta function of a function germ can be regarded
as an element of the Grothendieck ring of permutations $K_0(\mbox{perm.})$.
(The zeta function of a germ as an element of the ring $K_0(\mbox{perm.})$
can be defined as the virtual permutation whose ``usual'' zeta function coincides with the ``usual''
zeta function of the germ. Another definition:
this is the virtual permutation, Lefschetz numbers of the powers of which coincide with the Lefschetz
numbers of the powers of the classical monodromy transformation.)
In a similar way the zeta function of a transformation $\varphi:X\to X$ can be regarded as an
element of the Grothendieck ring $K_0(\mbox{l.f.perm.})$ of ``locally finite'' permutations.
(The details can be found below in the equivariant setting.)

An equivariant Lefschetz number $L^G(\varphi)$ of a ($G$-equivariant)
map of a $G$-space $X$ into itself was defined in \cite{LR} as an element of the
Grothendieck ring $K_0(\mbox{f.}G-\mbox{s.})$. It is defined for maps of finite $G$-CW-compexes
and also of the complements to $G$-subcomplexes in finite $G$-CW-complexes. If $X$ is a 
closed $G$-manifold
then $L^G(\varphi)$ can be defined as a (finite) $G$-set of fixed points of a generic $G$-equivariant
perturbation of the map $\varphi$ whose points are taken with appropriate signs.
If $X$ is a compact $G$-manifold with boundary, one has the same statement if one requires
that the perturbation under consideration has no fixed points on the boundary $\partial X$.
then $L^G(\varphi)$ can be defined as a (finite) $G$-set of fixed points of a generic $G$-equivariant
perturbation of the map $\varphi$ whose points are taken with appropriate signs.
(In \cite{GLM2}, one considers another equivariant analogue of the Lefschetz number
adapted to the problem solved there.)

A $G$-permutation on a $G$-set $A$ (finite or infinite one) is a one-to-one $G$-equivariant
map $\sigma$ of the set $A$ into itself or, what is the same, a structure of a $(\Z\times G)$-set
on $A$. 
A $G$-permutation $\sigma$ is {\em finite}, if the corresponding $G$-set $A$ is finite.
We shall say that a $G$-permutation $\sigma$ is {\em locally finite}, if the $\Z$-orbit
of each point is finite and, for any natural $M$, the set of points whose $\Z$-orbits have order
not exceeding $M$ is finite. (A power of a locally finite permutation is locally finite as well.)
The sets of isomorphism classes of finite and of locally finite $G$-permutations are semirings
with respect to the usual operations: the disjoint union and the Cartesian product.
The set of irreducible (i.e. not representable as the disjoint union of nontrivial elements)
$G$-permutations is in one-to-one correspondence with the set $consubf(\Z\times G)$ of the conjugacy
classes of subgroups of finite index in the group $\Z\times G$.
A subgroup ${\hat H}\subset\Z\times G$ of finite index can be described by a triple 
$(H,m,\alpha)$, where $H$ is a subgroup of the group $G$, $m$ is a natural number, $\alpha$ is an
element of the group $C_H/H$, where $C_H$ is the centralizer of the subgroup $H$:
$C_H=\{a\in G:a^{-1}Ha=H\}$.
A triple $(H,m,\alpha)$ corresponds to the subgroup ${\hat H}(H,m,\alpha)$ of the group $\Z\times G$
generated by $\{0\}\times H$ and $(m,\alpha)$. For $g\in C_H$, we permit ourselves to write
${\hat H}(H,m,g)$ instead of ${\hat H}(H,m,[g])$, where $[g]$ is the class of the element $g$ in $C_H/H$.
Let us denote the conjugacy class of the subgroup ${\hat H}(H,m,\alpha)$ in $\Z\times G$ by
${\hat{\mathfrak{H}}}(H,m,\alpha)$.
The triples $(H,m,\alpha)$ and $(H',m',\alpha')$ are equivalent (i.e. the corresponding subgroups in
$\Z\times G$ are conjugate to each other) if $m'=m$ and there exists $g\in G$ such that $H'=g^{-1}Hg$,
$\alpha'=g^{-1}\alpha g$.
Let $K_0(\mbox{f.}G-\mbox{perm.})$ and $K_0(\mbox{l.f.}G-\mbox{perm.})$ be the Grothendieck rings
of finite and of locally finite $G$-permutations respectively.
As a group, $K_0(\mbox{f.}G-\mbox{perm.})$ is the free Abelian group generated by
the isomorphism classes of the irreducible $G$-permutations;
$$
K_0(\mbox{l.f.}G-\mbox{perm.})=
\left\{\sum\limits_{{\hat{\mathfrak{H}}}\in consubf(\Z\times G)}
k_{\hat{\mathfrak{H}}}[\Z\times G/{\hat{H}}]\right\}\,,
$$
where, in general, one may have infinitely many non-zero coefficients $k_{\hat{\mathfrak{H}}}$
($\hat{H}$ is a representative of the class $\hat{\mathfrak{H}}$). 

For a locally finite $G$-permutation $\sigma$ regarded as a map of a discrete topological space into
itself, there is defined the equivariant Lefschetz number $L^G(\sigma)\in K_0(\mbox{f.}G-\mbox{s.})$
in the sense of \cite{LR}. This is simply the set of fixed points of the permutation $\sigma$
(i.e. of the action of the element $(1,e)\in\Z\times G$), regarded as a $G$-set.
For a virtual $G$-permutation $[\sigma_1]-[\sigma_2]\in K_0(\mbox{l.f.}G-\mbox{perm.})$,
it should be defined as $L^G(\sigma_1)-L^G(\sigma_2)$. 
(For $g\in G$, the map $g\sigma$ (i.e. the action of the element $(1,g)\in\Z\times G$)
is also a locally finite $G$-permutation and therefore the equivariant Lefschetz number $L^G(g\sigma)$
is defined.)

Let $\varphi:X\to X$ be a $G$-equivariant map of a $G$-space $X$ (``good'' in the sense described above)
into itself. 

\begin{definition}
{\em The equivariant zeta function} $\zeta^G_{\varphi}$ of the map $\varphi$ is a virtual
$G$-permutation $[\sigma]\in K_0(\mbox{l.f.}G-\mbox{perm.})$ such that
$L^G(g\sigma^m)=L^G(g{\varphi}^m)$ for all $m\ge 1$ and $g\in G$.
\end{definition}

\begin{proposition}
The equivariant zeta function of a $G$-equivariant map $\varphi$ is well defined
(i.e. the corresponding (virtual) $G$-permutation $[\sigma]$ exists and is unique).
\end{proposition}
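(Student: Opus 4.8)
The plan is to unwind the definition and reduce the statement to a triangular system of linear equations over the Burnside ring $K_0(\mathrm{f.}G\text{-s.})$. If $\sigma$ is a locally finite $G$-permutation with underlying $(\Z\times G)$-set $A$, then $L^G(g\sigma^m)$ is simply the $G$-set $\mathrm{Fix}((m,g);A)$ of fixed points of $(m,g)\in\Z\times G$. For the irreducible permutation on $\Z\times G/{\hat H}(H,m,\alpha)$ a direct computation shows that the coset $(n,x){\hat H}$ is fixed by $(k,g)$ exactly when $(k,x^{-1}gx)\in{\hat H}(H,m,\alpha)$, which in particular forces $m\mid k$; moreover for $k=m$ this happens for all $n,x$ precisely when $\alpha$ is trivial, and then $\mathrm{Fix}((m,e);\Z\times G/{\hat H}(H,m,[e]))=m\,[G/H]$, while for nontrivial $\alpha$ one needs to twist by a suitable $g\neq e$ to detect the brick. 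Writing a virtual locally finite $G$-permutation as $[\sigma]=\sum_{\hat{\mathfrak H}}a_{\hat{\mathfrak H}}[\Z\times G/{\hat H}]$, the equation for $L^G(g\sigma^k)$ therefore involves only the generators ${\hat{\mathfrak H}}(H,m,\alpha)$ with $m\mid k$, and the ``diagonal'' equations $k=m$ are governed by the generators with that exact value of $m$. Local finiteness of $\sigma$ means finitely many such generators occur for each $m$, so the system is honestly triangular for the divisibility order on $m$.

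Uniqueness is then immediate by induction on $m$: if all $a_{\hat{\mathfrak H}(H',m',\alpha')}$ with $m'\mid m$, $m'<m$, are already determined by the data $\{L^G(g\varphi^{m'})\}$, subtracting their (known) contributions from $\{L^G(g\varphi^{m})\}_{g\in G}$ leaves $\sum_{(H,\alpha)}a_{\hat{\mathfrak H}(H,m,\alpha)}\,\mathrm{Fix}((m,g);\Z\times G/{\hat H}(H,m,\alpha))$, and the computation above (take $g=e$ to pick off the trivial-$\alpha$ generators, then vary $g$ and run a secondary induction over $consub(G)$ and the twist parameter to separate the rest, using that $\{[G/H]\}$ is a basis of $K_0(\mathrm{f.}G\text{-s.})$) shows this linear map is injective over $\Q$. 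Hence at most one $[\sigma]$ can exist, and, being assembled level by level out of finitely many bricks, it is automatically locally finite.

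For existence one must show that the rational coefficients produced by inverting this triangular system are in fact integers, and that the resulting $[\sigma]$ satisfies not just the diagonal equations but all of them. The core issue is integrality, which is the equivariant form of the classical fact that in $L(\varphi^m)=\sum_{i\mid m}r_i$ the new term $r_m=\sum_{i\mid m}\mu(m/i)L(\varphi^i)$ is divisible by $m$ (reflecting that a $\Z$-orbit of exact length $m$ is a free $\Z/m$-set, which is why the factor $m$ appeared in front of $[G/H]$ above). I would obtain it by showing that the ``exact level-$m$'' part of the equivariant fixed-point data of $\varphi$ --- the alternating $G$-set got from $\{L^G(g\varphi^{i})\}_{i\mid m}$ by equivariant M\"obius inversion --- carries a free $\Z/m$-action commuting with the residual $G$-action and compatible with the isotropy stratification, hence is the equivariant Lefschetz class of a genuine virtual $G$-permutation all of whose irreducible summands have $\Z$-period $m$; together with the table-of-marks triangularity inside $K_0(\mathrm{f.}G\text{-s.})$ this forces integral coefficients $a_{\hat{\mathfrak H}(H,m,\alpha)}$. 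That the remaining (non-diagonal) equations then hold is a routine consequence of the M\"obius-inversion identity defining the coefficients; alternatively the whole of existence can be bypassed by fixing a finite $G$-CW model of $(X,\varphi)$, replacing $\varphi$ by a generic $G$-equivariant perturbation isolating its periodic orbits (as in the description of $L^G$ via perturbations recalled above), and reading $\sigma$ directly off the resulting periodic-orbit $G$-set, which by construction satisfies all the equations at once.

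The step I expect to be the real obstacle is precisely this integrality/divisibility claim --- an equivariant Dold-type congruence --- because, unlike the one-line number-theoretic argument in the non-equivariant case, it has to be reconciled with two interacting triangular structures (divisibility of the $\Z$-period $m$ and the table of marks on $consub(G)$) and with the twist parameter $\alpha\in C_H/H$; everything else reduces to bookkeeping once the basic bricks $\mathrm{Fix}((k,g);\Z\times G/{\hat H}(H,m,\alpha))$ have been computed.
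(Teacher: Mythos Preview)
Your approach is exactly the paper's: write the sought $[\sigma]$ as $\sum_{\hat{\mathfrak H}}k_{\hat{\mathfrak H}}[(\Z\times G)/\hat H]$, express each $L^G(g\varphi^m)$ as a linear combination of the $k_{\hat{\mathfrak H}}$, observe that the resulting system is triangular for the natural partial order on the triples $(\mathfrak H,m,\alpha)$, and solve recursively. The paper's proof is a single paragraph that does precisely this and nothing more; it records the identity
\[
\ell_{\mathfrak H}(m,g)=\sum_{\hat{\mathfrak H}\subset\hat{\mathfrak H}(H,m,g)}k_{\hat{\mathfrak H}}
\]
for the coefficient of $[G/H]$ in $L^G(g\varphi^m)$ and then simply says the system can be inverted.

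Where you go beyond the paper is in isolating the integrality question. The paper does not discuss it: taken at face value the displayed identity has all contributing $k_{\hat{\mathfrak H}}$ entering with coefficient $1$, which would make the system \emph{uni}triangular and existence automatic over $\Z$. Your own brick computation $\mathrm{Fix}\bigl((m,e);(\Z\times G)/\hat H(H,m,[e])\bigr)=m\,[G/H]$ (already visible in the classical case $G=\{e\}$, where the diagonal entry is $m$) shows that the honest diagonal coefficients are not units, so either the paper's sum is to be read with implicit multiplicities or the paper is silently invoking the very Dold-type congruence you flag. Your two suggested routes to integrality --- an equivariant M\"obius/Dold argument, or a direct geometric construction of $\sigma$ from isolated periodic orbits --- are both reasonable and go further than what the paper supplies. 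In short: same strategy, but you are being more scrupulous about exactly the step the paper elides.
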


\begin{proof}
Assume that
$$
\zeta^G_{\varphi}=
\sum_{{\hat{\mathfrak{H}}}\in consubf(\Z\times G)} k_{\hat{\mathfrak{H}}}[(\Z\times G)/{\hat{H}}]\,.
$$
Then, for $g\in C_H$, the coefficient $\ell_h(m,g)$  at the generator $[G/H]$ in $L^G(g\varphi^m)$ is
equal to
$$
\sum_{{\hat{\mathfrak{H}}}\subset {\hat{\mathfrak{H}}}(H,m,g)} k_{\hat{\mathfrak{H}}}\,.
$$
The system
$$
\ell_{\mathfrak{H}}(m,g)=
\sum_{{\hat{\mathfrak{H}}}\subset {\hat{\mathfrak{H}}}(H,m,g)} k_{\hat{\mathfrak{H}}}
$$
is triangular (with respect to the natural ordering of the triples $(\mathfrak{H},m,\alpha)$)
system of equations, which permits to find the coefficients $k_{\hat{\mathfrak{H}}}$ recurrently.
\end{proof}

The following properties of the equivariant zeta function are easy corollaries from the definition.

\begin{proposition}\label{homotopy}
If $G$-maps $\varphi_i:X\to X$, $i=1,2$, are homotopic in the class of $G$-maps, then
$\zeta^G_{\varphi_1}=\zeta^G_{\varphi_2}$.
\end{proposition}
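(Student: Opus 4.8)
The plan is to reduce the statement to the homotopy invariance of the equivariant Lefschetz number $L^G$ of \cite{LR}. By the definition of the equivariant zeta function and by the previous proposition, $\zeta^G_\varphi$ is the unique virtual $G$-permutation solving the triangular system written out in the proof above, the right hand sides of which are the coefficients of the elements $L^G(g\varphi^m)\in K_0(\mbox{f.}G-\mbox{s.})$, $m\ge 1$, $g\in G$ (the coefficient $\ell_{\mathfrak{H}}(m,g)$ at $[G/H]$ being, by definition, the corresponding coefficient of $L^G(g\varphi^m)$). Hence it suffices to show that $L^G(g\varphi_1^m)=L^G(g\varphi_2^m)$ for every $m\ge 1$ and every $g\in G$: the virtual $G$-permutations $\zeta^G_{\varphi_1}$ and $\zeta^G_{\varphi_2}$ then satisfy one and the same system, so by uniqueness they coincide.

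To get this equality I would first manufacture the corresponding homotopies of iterates. Let $\Phi:X\times[0,1]\to X$ be a $G$-homotopy with $\Phi_0=\varphi_1$, $\Phi_1=\varphi_2$, so that each slice $\Phi_\tau$ is a $G$-equivariant self-map of $X$. Then the family $\Phi_\tau^m:=\Phi_\tau\circ\cdots\circ\Phi_\tau$ ($m$ factors) depends continuously on $\tau\in[0,1]$, consists of $G$-equivariant maps, and satisfies $\Phi_0^m=\varphi_1^m$, $\Phi_1^m=\varphi_2^m$; thus $\varphi_1^m$ and $\varphi_2^m$ are homotopic in the class of $G$-maps. Post-composing with the homeomorphism of $X$ given by the action of the element $g$, the family $\tau\mapsto g\,\Phi_\tau^m$ is a homotopy joining $g\varphi_1^m$ to $g\varphi_2^m$ which at every value of $\tau$ stays within the class of maps of the form ``the action of $g\in G$ after a $G$-equivariant self-map of $X$'' --- exactly the class of maps to which the equivariant Lefschetz number is applied above.

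Finally I would invoke the homotopy invariance of the equivariant Lefschetz number from \cite{LR}, which applied to the homotopy $g\,\Phi^m$ gives $L^G(g\varphi_1^m)=L^G(g\varphi_2^m)$, and then conclude as in the first paragraph. The step I expect to demand the most care is this last one: one must make sure that all the intermediate maps $g\,\Phi_\tau^m$ lie in the domain of the version of $L^G$ being used (finite $G$-CW-complexes and complements of $G$-subcomplexes in them, which covers the cases of interest, e.g. the Milnor fibre) and that the homotopy-invariance statement of \cite{LR} applies verbatim in this ``$g$-twisted'' situation. Granting this, the proof is complete. Alternatively, when $X$ is a compact $G$-manifold, one can argue geometrically: $L^G(g\varphi^m)$ is the signed $G$-set of fixed points of a generic $G$-equivariant perturbation, and a generic $G$-equivariant perturbation of the family $g\,\Phi^m$ that keeps the values $\tau=0,1$ fixed and has no fixed points on $\partial X\times[0,1]$ has fixed-point set a compact $1$-dimensional $G$-manifold whose boundary, with signs, is the disjoint union of the fixed-point $G$-sets of $g\varphi_1^m$ and $g\varphi_2^m$; hence these two represent the same element of $K_0(\mbox{f.}G-\mbox{s.})$.
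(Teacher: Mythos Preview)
Your argument is correct and is exactly the intended one: the paper does not give a separate proof of this proposition but simply declares it (together with Propositions~\ref{multi}--\ref{elementary}) an ``easy corollary from the definition.'' Your write-up is precisely the natural unpacking of that remark---reduce to $L^G(g\varphi_1^m)=L^G(g\varphi_2^m)$ via the uniqueness proved in Proposition~1, build the obvious homotopy of iterates, and invoke the homotopy invariance of $L^G$ from \cite{LR}. Your caution about the ``$g$-twisted'' maps (which are not literally $G$-equivariant unless $g$ is central) is well placed; the paper tacitly assumes that $L^G$ and its homotopy invariance extend to such maps, and your remark makes this explicit.
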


\begin{proposition}\label{multi}
 Suppose that a $G$-map $\varphi:X\to X$ sends to itself a $G$-invariant subspace
$Y\subset X$ and its complement $X\setminus Y$. Then $\zeta^G_{\varphi}=
 \zeta^G_{\varphi_{\vert Y}}+\zeta^G_{\varphi_{\vert X\setminus Y}}$.
\end{proposition}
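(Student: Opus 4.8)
The plan is to reduce the assertion to the additivity of the equivariant Lefschetz number $L^G$ of \cite{LR}, using the uniqueness part of the proposition above establishing that $\zeta^G_\varphi$ is well defined. By (the proof of) that proposition, a virtual $G$-permutation $[\sigma]\in K_0(\mbox{l.f.}G-\mbox{perm.})$ is uniquely determined by the collection of equivariant Lefschetz numbers $L^G(g\sigma^m)\in K_0(\mbox{f.}G-\mbox{s.})$, $m\ge1$, $g\in G$. Also $L^G$ is additive with respect to disjoint unions of $G$-permutations: $(\sigma_1\sqcup\sigma_2)^m=\sigma_1^m\sqcup\sigma_2^m$, and the fixed-point $G$-set of $g(\sigma_1\sqcup\sigma_2)^m$ is the disjoint union of those of $g\sigma_1^m$ and of $g\sigma_2^m$, so $L^G(g(\sigma_1\sqcup\sigma_2)^m)=L^G(g\sigma_1^m)+L^G(g\sigma_2^m)$; this extends to virtual $G$-permutations by definition. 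Hence, putting $\sigma:=\zeta^G_{\varphi_{\vert Y}}+\zeta^G_{\varphi_{\vert X\setminus Y}}$, it suffices to prove
$$
L^G(g\varphi^m)=L^G\bigl(g(\varphi_{\vert Y})^m\bigr)+L^G\bigl(g(\varphi_{\vert X\setminus Y})^m\bigr)
$$
for all $m\ge1$ and $g\in G$: the right-hand side then equals $L^G(g\sigma^m)$, and uniqueness identifies $\sigma$ with $\zeta^G_\varphi$.

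To prove the displayed identity, note that $G$-equivariance of $\varphi$ gives $\varphi^m(gx)=g\varphi^m(x)$, so the self-map $g\varphi^m$ of $X$ is well defined; since $Y$ and $X\setminus Y$ are both $G$-invariant and $\varphi$-invariant, $g\varphi^m$ maps each of them into itself, with restrictions exactly $g(\varphi_{\vert Y})^m$ and $g(\varphi_{\vert X\setminus Y})^m$. Using Proposition~\ref{homotopy} (applied to $\varphi$ and, at the same time, to its two restrictions) and the existence of a finite $G$-CW-structure on $X$ adapted to $Y$ --- as in the discussion preceding \eqref{chi_G}, $Y$ being semi-analytic --- together with equivariant cellular approximation of the $G$-map of pairs $\varphi\colon(X,Y)\to(X,Y)$, we may assume that $Y$ is a $G$-subcomplex, that $X\setminus Y$ is the complement of a $G$-subcomplex, and that $\varphi$ is cellular; then $\varphi$, $\varphi_{\vert Y}$, $\varphi_{\vert X\setminus Y}$ and the twisted powers $g\varphi^m$, $g(\varphi_{\vert Y})^m$, $g(\varphi_{\vert X\setminus Y})^m$ are all of the type for which $L^G$ is defined in \cite{LR}. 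The displayed identity, applied with $\psi=g\varphi^m$, is then precisely the additivity of $L^G$ for a decomposition of a $G$-space into a $G$-subcomplex and its complement, the equivariant counterpart of the additivity of $\chi_G$ in \eqref{chi_G}: on equivariant cellular chains the $(\Z\times G)$-module data of $X$ splits as the direct sum of those of $Y$ and of $X\setminus Y$, and $L^G$, being the resulting equivariant ``trace'', respects this splitting.

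The main obstacle I expect is the last step: arranging a single $G$-CW-structure in which $Y$ is a subcomplex and, compatibly, replacing $\varphi$ by a cellular $G$-map without altering any of $\varphi$, $\varphi_{\vert Y}$, $\varphi_{\vert X\setminus Y}$ up to $G$-homotopy, so that Proposition~\ref{homotopy} applies to all three at once and the additivity of $L^G$ from \cite{LR} can be invoked verbatim. The other ingredients --- the reduction via uniqueness, the additivity of $L^G$ over disjoint unions, and the identification of the restrictions of $g\varphi^m$ --- are formal.
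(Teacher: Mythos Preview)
Your approach is correct and is exactly what the paper has in mind: Proposition~\ref{multi} is not given a separate proof in the paper but is simply declared an ``easy corollary from the definition,'' i.e., the reduction via uniqueness to the additivity of the equivariant Lefschetz numbers $L^G(g\varphi^m)$ with respect to the decomposition $X=Y\sqcup(X\setminus Y)$. Your write-up spells this out in more detail than the paper does; the technical worry you flag about arranging a compatible $G$-CW-structure and cellular approximation is legitimate but is absorbed by the paper's standing hypothesis that the spaces are ``good'' (real semi-analytic, admitting the equivariant cell decompositions described before~\eqref{chi_G}) and that $L^G$ in the sense of \cite{LR} is already defined for subcomplexes and their complements.
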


\begin{proposition}\label{product}
 For $G$-maps $\varphi_i:X_i\to X_i$, $i=1,2$, one has the equation 
 $\zeta^G_{\varphi_1\times\varphi_2}=\zeta^G_{\varphi_1}\cdot\zeta^G_{\varphi_2}$.
 $($Here $\varphi_1\times\varphi_2$ is the map of the space $X_1\times X_2$ into itself
sending $(x_1, x_2)\in X_1\times X_2$ to $(\varphi(x_1),\varphi(x_2))$.$)$
\end{proposition}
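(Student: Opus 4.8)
The plan is to use the characterization of the equivariant zeta function by its equivariant Lefschetz numbers: $\zeta^G_{\varphi_1\times\varphi_2}$ is the \emph{unique} virtual $G$-permutation $[\tau]\in K_0(\mbox{l.f.}G-\mbox{perm.})$ with $L^G(g\tau^m)=L^G(g(\varphi_1\times\varphi_2)^m)$ for all $m\ge1$ and $g\in G$. It therefore suffices to show that the product $[\tau]:=\zeta^G_{\varphi_1}\cdot\zeta^G_{\varphi_2}$ taken in the ring $K_0(\mbox{l.f.}G-\mbox{perm.})$ (i.e.\ via the Cartesian product of $G$-permutations, extended bilinearly to virtual ones) has this property.

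First I would check that this product is legitimate, i.e.\ that the Cartesian product $\sigma_1\times\sigma_2$ of two locally finite $G$-permutations is again locally finite: the $\Z$-orbit of $(a_1,a_2)$ under $\sigma_1\times\sigma_2$ has order $\mathrm{lcm}$ of the orders of the $\Z$-orbits of $a_1$ and of $a_2$, hence is finite; and if this order does not exceed $M$, then the orders of the $\Z$-orbits of $a_1$ and of $a_2$ do not exceed $M$ either, so $(a_1,a_2)$ lies in the product of two finite sets. Writing $\zeta^G_{\varphi_i}=[\sigma_i^{+}]-[\sigma_i^{-}]$ with honest locally finite $G$-permutations, $[\tau]$ is obtained by bilinearity from the Cartesian products $\sigma_1^{\pm}\times\sigma_2^{\pm}$. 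Since the $m$-th iterate of a Cartesian product of self-maps is the Cartesian product of the $m$-th iterates, and the diagonal $G$-action satisfies $g(\sigma_1\times\sigma_2)=(g\sigma_1)\times(g\sigma_2)$, the $m$-th power entering the definition is, representative-wise, $g\tau^m=(g\sigma_1^m)\times(g\sigma_2^m)$ (again extended bilinearly over the virtual decomposition).

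Now I compute $L^G(g\tau^m)$. For $G$-permutations of discrete $G$-spaces the equivariant Lefschetz number is the $G$-set of fixed points, and the fixed-point set of a Cartesian product of maps is the Cartesian product of the fixed-point sets, which, as a $G$-set with the diagonal action, is precisely the product in the Burnside ring; hence $L^G\bigl((g\sigma_1^m)\times(g\sigma_2^m)\bigr)=L^G(g\sigma_1^m)\cdot L^G(g\sigma_2^m)$. By the definition of $\zeta^G_{\varphi_i}$ this equals $L^G(g\varphi_1^m)\cdot L^G(g\varphi_2^m)$. On the other side, since $g(\varphi_1\times\varphi_2)^m=(g\varphi_1^m)\times(g\varphi_2^m)$, the multiplicativity of the equivariant Lefschetz number under Cartesian products, $L^G(\psi_1\times\psi_2)=L^G(\psi_1)\cdot L^G(\psi_2)$ for $G$-maps $\psi_i\colon X_i\to X_i$, yields $L^G(g(\varphi_1\times\varphi_2)^m)=L^G(g\varphi_1^m)\cdot L^G(g\varphi_2^m)$. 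Comparing the two chains of equalities gives $L^G(g\tau^m)=L^G(g(\varphi_1\times\varphi_2)^m)$ for all $m\ge1$ and $g\in G$, and the uniqueness in the definition of $\zeta^G_{\varphi_1\times\varphi_2}$ finishes the proof.

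The step I expect to be the main obstacle is the last ingredient: the multiplicativity of the equivariant Lefschetz number $L^G$ under Cartesian products of $G$-maps. For locally finite $G$-permutations this is the elementary fixed-point computation above, but for arbitrary ``good'' $G$-spaces one either quotes it from \cite{LR} or deduces it from the classical identity $L(\psi_1\times\psi_2)=L(\psi_1)L(\psi_2)$ together with $(X_1\times X_2)^{H}=X_1^{H}\times X_2^{H}$ and $(\psi_1\times\psi_2)^{H}=\psi_1^{H}\times\psi_2^{H}$, via the Möbius inversion over $consub(G)$ that expresses the coefficient of $[G/H]$ in $L^G$ through ordinary Lefschetz numbers of the fixed-point strata. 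A minor point to keep in mind is that $X_1\times X_2$ with the diagonal $G$-action is again ``good'' in the required sense, and that all the identities above are consistent with the bilinear extension to virtual permutations.
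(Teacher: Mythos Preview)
Your argument is correct and is exactly the sort of ``easy corollary from the definition'' the paper intends: it states Proposition~\ref{product} without proof, relying on the characterization of $\zeta^G_\varphi$ via the equivariant Lefschetz numbers $L^G(g\varphi^m)$ together with the multiplicativity of $L^G$ under Cartesian products. Your write-up simply spells out these details (including the check that the product of locally finite $G$-permutations is locally finite), so there is nothing to add or correct.
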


\begin{remark}
 Propositions ~\ref{multi} and \ref{product} permit to define the equivariant monodromy zeta function
 as a homomorphism from the Grothendieck ring of germs of constructible $G$-invariant functions
 on germs of constructible $G$-sets into the Grothendieck ring of finite $(\Z\times G)$-sets.
\end{remark}

\begin{proposition}\label{elementary}
 Let $X=X^{(\mathfrak{H})}$, i.e. the isotropy groups of all points of $X$ are conjugate 
 to a subgroup $H\in {\mathfrak{H}}$,
 and let $\varphi:X\to X$ be a map such that $g\varphi^m(x)\ne x$ for all $x\in X$,
 $g\in G$, $m=1, \ldots, m_0-1$ (i.e., for $1\le m<m_0$, the map $\varphi^m$ does not send any
 $G$-orbit into itself); $g_0\varphi^{m_0}= id$. (Notice that in this case $g_0$ lies in
 the centralizer $C_H$ of the subgroup $H$.) Then
$$
\zeta^G_{\varphi}=\frac{\chi(X/G)}{m_0}[(\Z\times G)/{\hat{H}}(H, m_0, g_0)]
=\chi(X/(\Z\times G))[(\Z\times G)/{\hat{H}}(H, m_0, g_0)]
$$
(in this case the factor $X/(\Z\times G)$ is defined.).
\end{proposition}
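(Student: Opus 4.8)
The plan is to realise $\zeta^G_\varphi$ as the equivariant Euler characteristic of $X$ for the action of $\Z\times G$ in which $(k,g)$ sends $x$ to $g\varphi^k(x)$ (a well-defined action, since $\varphi$ is a $G$-map). The relation $g_0\varphi^{m_0}=\mathrm{id}$ says that $(m_0,g_0)$ acts trivially, so every $(\Z\times G)$-orbit is finite and the quotient $X/(\Z\times G)$ is a (semi-analytic) space. First I would dispose of the second equality: $\varphi$ descends to a map $\bar\varphi\colon X/G\to X/G$ with $X/(\Z\times G)=(X/G)/\langle\bar\varphi\rangle$, and by hypothesis $\bar\varphi^{m_0}=\mathrm{id}$ whereas $\bar\varphi^{m}$ has no fixed point for $0<m<m_0$; hence $\langle\bar\varphi\rangle\cong\Z/m_0\Z$ acts freely on $X/G$, the map $X/G\to X/(\Z\times G)$ is an $m_0$-fold covering, and $\chi(X/(\Z\times G))=\chi(X/G)/m_0$. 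It then remains to prove $\zeta^G_\varphi=\chi(X/(\Z\times G))\,[(\Z\times G)/\hat H(H,m_0,g_0)]$.

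The next step is to determine the $(\Z\times G)$-orbit structure. From $\varphi^{qm_0}=g_0^{-q}$ together with the assumption that $\varphi^r$ carries no $G$-orbit into itself for $1\le r<m_0$, one computes that the $(\Z\times G)$-stabiliser of a point $x$ is exactly the subgroup $\hat H(G_x,m_0,g_0)$ generated by $\{0\}\times G_x$ and $(m_0,g_0)$ (note that $(m_0,g_0)$ stabilises every point, and, as noted in the statement for a representative of $\mathfrak H$ and by the same argument for every $x$, $g_0$ normalises each isotropy subgroup $G_x$). A short group-theoretic check using this normality of $g_0$ then shows that all these stabilisers are conjugate in $\Z\times G$ to $\hat H(H,m_0,g_0)$; that is, the $(\Z\times G)$-action on $X$ has constant orbit type $\hat{\mathfrak H}(H,m_0,g_0)$. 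Consequently the projection $p\colon X\to X/(\Z\times G)$ is a locally trivial fibration with fibre $(\Z\times G)/\hat H(H,m_0,g_0)$ and finite structure group.

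Now I would compute $\zeta^G_\varphi$ cell by cell, using Propositions~\ref{multi} and~\ref{product}. Choose a finite decomposition of $X/(\Z\times G)$ into open cells $\tau$; over each (contractible) $\tau$ the fibration $p$ is $(\Z\times G)$-equivariantly trivial, so $X=\bigsqcup_\tau P_\tau$ with each $P_\tau:=p^{-1}(\tau)\cong (\Z\times G)/\hat H(H,m_0,g_0)\times\tau$ as a $(\Z\times G)$-space, the group acting on the first factor only; each $P_\tau$ is $\varphi$-invariant, and under this identification $\varphi|_{P_\tau}=\lambda\times\mathrm{id}_\tau$, where $\lambda$ is left translation by $(1,e)$ on $(\Z\times G)/\hat H(H,m_0,g_0)$. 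By Proposition~\ref{multi}, $\zeta^G_\varphi=\sum_\tau\zeta^G_{\varphi|_{P_\tau}}$, and by Proposition~\ref{product}, $\zeta^G_{\varphi|_{P_\tau}}=\zeta^G_\lambda\cdot\zeta^G_{\mathrm{id}_\tau}$. Here $\lambda$ is itself a $G$-permutation, its underlying $(\Z\times G)$-set being $(\Z\times G)/\hat H(H,m_0,g_0)$, so directly from the definition $\zeta^G_\lambda=[(\Z\times G)/\hat H(H,m_0,g_0)]$; and since $\zeta^G_{\mathrm{id}}$ is additive (Proposition~\ref{multi}) and equals the unit $[\mathrm{pt}]$ on a point, $\zeta^G_{\mathrm{id}_\tau}=\chi(\tau)\,[\mathrm{pt}]=(-1)^{\dim\tau}[\mathrm{pt}]$ (the $G$-action on $\tau$ being trivial). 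Summing,
$$
\zeta^G_\varphi=\Bigl(\sum_\tau(-1)^{\dim\tau}\Bigr)[(\Z\times G)/\hat H(H,m_0,g_0)]=\chi(X/(\Z\times G))\,[(\Z\times G)/\hat H(H,m_0,g_0)]\,,
$$
which together with the first paragraph is the assertion. (The point is that although $p$ need not be globally trivial, $\zeta^G$ only feels the Euler characteristic of the base.)

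I expect the two non-routine ingredients to be: (i) the verification in the second paragraph that all the stabilisers $\hat H(G_x,m_0,g_0)$ are $(\Z\times G)$-conjugate — this is precisely what makes a single generator $[(\Z\times G)/\hat H(H,m_0,g_0)]$ appear, and it rests on $g_0$ normalising the isotropy subgroups; and (ii) the local triviality, and in particular the \emph{equivariant} triviality over cells, of $X\to X/(\Z\times G)$, i.e.\ a principal-orbit-type (fibration) theorem in the semi-analytic/CW category. An alternative for (ii) would be to take a $(\Z\times G)$-equivariant triangulation of $X$ on which $\varphi$ acts simplicially and to read $\zeta^G_\varphi$ off the cell $(\Z\times G)$-sets $C_n$; but then one must instead show that the contributions of cells whose stabiliser is strictly larger than $\hat H(H,m_0,g_0)$ cancel in the alternating sum, which is comparable work.
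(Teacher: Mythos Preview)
The paper does not give a proof of this proposition; it is listed among several ``easy corollaries from the definition'' (Propositions~\ref{homotopy}--\ref{elementary}) and left to the reader. Your argument is a correct and natural way to fill this in, and it matches how the proposition is implicitly used later in the proof of Theorem~\ref{teoACampo}: the hypotheses force the $(\Z\times G)$-action on $X$ (with $(k,g)\cdot x=g\varphi^k(x)$) to have the single orbit type $\hat{\mathfrak H}(H,m_0,g_0)$, after which the equivariant zeta function is the Euler characteristic of the orbit space times the class of one orbit, via additivity over a cell decomposition.

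One refinement to your point~(i). The fact that $g_0$ normalises each $G_x$ is \emph{not} by itself enough to conclude that the subgroups $\hat H(G_x,m_0,g_0)$ are all conjugate in $\Z\times G$; one can write down pairs $H,\,H'=aHa^{-1}$ with $g_0\in N_G(H)\cap N_G(H')$ for which $\hat H(H,m_0,g_0)$ and $\hat H(H',m_0,g_0)$ are not conjugate. What the hypotheses actually give is stronger: since $\varphi^{m_0}=g_0^{-1}$ is $G$-equivariant, for every $g\in G$ and every $x\in X$ one has $g_0^{-1}gx=gg_0^{-1}x$, i.e.
\[
g^{-1}g_0g\in g_0G_x\qquad\text{for all }g\in G,\ x\in X.
\]
In other words, all $G$-conjugates of $g_0$ represent the same class as $g_0$ in $N_G(G_x)/G_x$, for every $x$. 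Now if $G_y=aG_xa^{-1}$, conjugating $\hat H(G_x,m_0,g_0)$ by $(0,a)$ yields $\hat H(G_y,m_0,ag_0a^{-1})$, and the displayed identity (applied at $y$ with $g=a^{-1}$) gives $ag_0a^{-1}\in g_0G_y$, whence $\hat H(G_y,m_0,ag_0a^{-1})=\hat H(G_y,m_0,g_0)$. With this correction your step~(i) goes through, and the rest of your outline is sound.
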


Let us formulate equivariant analogues of some well-known statements about the monodromy
and its zeta function.

Let $f_1:(V_1,0)\to(\C,0)$ and $f_2:(V_2,0)\to(\C,0)$ be two germs of $G$-invariant complex
analytic functions.
{\em The Sebastiani--Thom sum} of the germs $f_1$ and $f_2$ is the germ
$f_1\oplus f_2:(V_1\times V_2,0)\to(\C,0)$
given by the equation $(f_1\oplus f_2)(x,y)=f_1(x)+ f_2(y)$. The germ $f_1\oplus f_2$ is invariant
with respect to the natural (diagonal) $G$-action on $(V_1\times V_2,0)$.
The following statement is an equivariant analogue of the Sebastiani--Thom theorem which says 
that the monodromy operator of the Sebastiani--Thom sum of function germs on the affine spaces
$(\C^n,0)$ with isolated critical points at the origin is equal to the tensor product of the monodromy
operators of the germs (or rather of its corollary for the monodromy zeta function).

\begin{theorem} (the equivariant Sebastiani--Thom theorem) 
$$\zeta^G_{f_1\oplus f_2} = \zeta^G_{f_1} + \zeta^G_{f_2} - \zeta^G_{f_1}\cdot\zeta^G_{f_2}\,.$$
\end{theorem}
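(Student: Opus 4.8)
The strategy I would follow is to deduce the equivariant identity from the classical ``join'' description of the Sebastiani--Thom Milnor fibration together with a purely formal computation in $K_0(\mbox{l.f.}G-\mbox{perm.})$ based on Propositions~\ref{homotopy}, \ref{multi} and \ref{product}.

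First, the geometric input. Choosing $G$-invariant Milnor balls $B_{\delta_1}\subset\C^{N_1}$, $B_{\delta_2}\subset\C^{N_2}$ and $0<|\eps|\ll\delta_1,\delta_2$, one fibres $(f_1\oplus f_2)^{-1}(\eps)\cap(B_{\delta_1}\times B_{\delta_2})$ over the affine line $\{(a,b):a+b=\eps\}\cong\C$ by $(x,y)\mapsto(f_1(x),f_2(y))$: over an interior point $a$ of the segment joining $0$ and $\eps$ the fibre is $M_{f_1}\times M_{f_2}$, while it degenerates to $M_{f_2}$ over $a=0$ and to $M_{f_1}$ over $a=\eps$. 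This is precisely the mapping--cylinder presentation of the join, and, following the loop $\eps\mapsto\eps\exp(2\pi i\tau)$, both endpoints of the segment travel once around $0$; hence the pair (Milnor fibre, monodromy) of $f_1\oplus f_2$ is, up to homotopy, $(M_{f_1}*M_{f_2},\ \Gamma_{f_1}*\Gamma_{f_2})$ with the join monodromy. Since $G$ acts diagonally, preserves the round balls and acts trivially on the target $\C$ (hence on the line and on the segment), and since $f_1$, $f_2$ are $G$-invariant, this whole construction is $G$-equivariant, so the identification is one of ``$G$-space with a $G$-self-map'' up to $G$-homotopy. As $\zeta^G_\varphi$ is built solely from the equivariant Lefschetz numbers $L^G(g\varphi^m)$, which are $G$-homotopy invariants (a mild extension of Proposition~\ref{homotopy}), we obtain $\zeta^G_{f_1\oplus f_2}=\zeta^G_{\Gamma_{f_1}*\Gamma_{f_2}}$.

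Next, a join formula for $\zeta^G$. For arbitrary $G$-maps $\varphi:X\to X$, $\psi:Y\to Y$ I claim $\zeta^G_{\varphi*\psi}=\zeta^G_\varphi+\zeta^G_\psi-\zeta^G_\varphi\cdot\zeta^G_\psi$. Decompose the join in the ``height'' coordinate $s\in[0,1]$ as $X*Y=X\sqcup\bigl(X\times(0,1)\times Y\bigr)\sqcup Y$ (the loci $s=0$, $0<s<1$, $s=1$); these are $G$-invariant and $(\varphi*\psi)$-invariant, and $\varphi*\psi$ restricts to $\varphi$, to $\varphi\times\mathrm{id}_{(0,1)}\times\psi$, and to $\psi$ on the three pieces. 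Two applications of Proposition~\ref{multi} give $\zeta^G_{\varphi*\psi}=\zeta^G_\varphi+\zeta^G_\psi+\zeta^G_{\varphi\times\mathrm{id}_{(0,1)}\times\psi}$, and Proposition~\ref{product} gives $\zeta^G_{\varphi\times\mathrm{id}_{(0,1)}\times\psi}=\zeta^G_\varphi\cdot\zeta^G_{\mathrm{id}_{(0,1)}}\cdot\zeta^G_\psi$. Finally $\zeta^G_{\mathrm{id}_{(0,1)}}=-1$, the negative of the unit of $K_0(\mbox{l.f.}G-\mbox{perm.})$: the interval carries the trivial $G$-action, so $L^G(g\,\mathrm{id}^m)=\chi_c((0,1))\cdot[\mathrm{pt}]=-[\mathrm{pt}]$ for all $m\ge1$, $g\in G$, which is exactly the equivariant Lefschetz datum of $-[(\Z\times G)/(\Z\times G)]$. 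Hence $\zeta^G_{\varphi*\psi}=\zeta^G_\varphi+\zeta^G_\psi-\zeta^G_\varphi\cdot\zeta^G_\psi$; applying this with $\varphi=\Gamma_{f_1}$, $\psi=\Gamma_{f_2}$ and combining with the previous paragraph yields $\zeta^G_{f_1\oplus f_2}=\zeta^G_{f_1}+\zeta^G_{f_2}-\zeta^G_{f_1}\cdot\zeta^G_{f_2}$.

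The main obstacle is the geometric input: the join description of the Sebastiani--Thom Milnor fibration is classical for germs with isolated singularity on smooth spaces, whereas here I need it, equivariantly, for arbitrary $G$-invariant germs $(V_i,0)$. I would handle this either by invoking the general Sebastiani--Thom statement, or — since only the numbers $L^G(g\Gamma^m)$ enter, so that the uniqueness in the Definition suffices — by its $G$-equivariant (co)homological version; alternatively one could circumvent it through the equivariant A'Campo formula applied to an adapted $G$-resolution of $f_1\oplus f_2$. Everything else (the three-piece splitting of the join, the evaluation $\zeta^G_{\mathrm{id}_{(0,1)}}=-1$, and the bookkeeping with Propositions~\ref{multi} and \ref{product}) is routine.
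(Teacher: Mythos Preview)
Your proof is correct and follows essentially the same route as the paper: reduce to the join $M_{f_1}\star M_{f_2}$ with the joined monodromy (the paper cites \cite[\S2.7]{AGV1} for this step, which addresses the ``obstacle'' you flag), then split the join into $M_{f_1}$, $M_{f_2}$ and $M_{f_1}\times M_{f_2}\times(0,1)$ and apply Proposition~\ref{multi}. You spell out more of the bookkeeping (the explicit use of Proposition~\ref{product} and the computation $\zeta^G_{\mathrm{id}_{(0,1)}}=-1$) than the paper does, but the argument is the same.
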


\begin{proof}
 The statement follows from the fact that the Milnor fibre $V_{f_1\oplus f_2}$ of the germ
$f_1\oplus f_2$ can be $G$-equivariantly retracted (by a deformation) onto the join
$V_{f_1}\star V_{f_2}$ of the Milnor fibres of the germs $f_1$ and $f_2$, see, e.g.
\cite[section 2.7]{AGV1}. Moreover, the monodromy transformation on $V_{f_1}\star V_{f_2}$ is the
join of the corresponding transformations on $V_{f_1}$ and on $V_{f_2}$. Finally, the join
$V_{f_1}\star V_{f_2}$ can be represented as the (disjoint) union of the $G$-invariant subspaces
$V_{f_1}$, $V_{f_2}$ and 
$(V_{f_1}\star V_{f_2})\setminus(V_{f_1}\cup V_{f_2})\simeq V_{f_1}\times V_{f_2}\times (0,1)$
(with the trivial action of $G$ on $(0,1)$), to which one can apply Proposition~\ref{multi}.
\end{proof}

Let $(V, 0)$ be a germ of a complex analytic $G$-space of pure dimension $n$ and let $f:(V, 0)\to(\C,0)$
be a $G$-invariant germ of a complex analytic function such that the set $V_{sing}$ of the singularities
of the space $V$ is contained in the zero-level set $\{f=0\}$ of the germ $f$
(for example, $V$ has an isolated singularity at the origin). Let $\pi:(X,D)\to(V,0)$
($D=\pi{-1}(0)$) be a $G$-resolution of the singularity $f$ which is an isomorphism outside of
$\{f=0\}$. This means that:
\begin{enumerate}
\item[1)] $X$ is a (non-singular) complex analytic manifold of dimension $n$ with an action of the
group $G$;
\item[2)] $\pi$ is a proper analytic map commuting with the action of the group $G$;
\item[3)] the total transform $\pi^{-1}(\{f=0\})$ of the zero-level set of the germ $f$ is a
normal crossing divisor on $X$.
\end{enumerate}
The last condition is equivalent to the fact that, in a neighbourhood of any point 
$x\in\pi^{-1}(\{f=0\})$, there exists a local coordinate system $u_1, \ldots, u_n$ on $X$ centred
at the point $x$, such that the lifting $\widetilde{f}:=f\circ\pi$ of the germ $f$ to the space of
resolution $X$ has the form $\widetilde{f}(u_1, \ldots, u_n)=u_1^{m_1}\cdot\ldots\cdot u_n^{m_n}$,
where $m_i$ are non-negative integers not all equal to zero. We also will assume that
{\em the exceptional divisor} $D = \pi^{-1}(0)$ is a normal crossing divisor on $X$ as well and that
for any point $x\in D$ all the components $E_i$ of the exceptional divisor $D$ passing through the
point $x$ are invariant with respect to the isotropy subgroup $G_x$ of the point $x$.

\begin{remark}
 The last condition is not too essential and is required here for convenience of reasonings.
 One can always achieve this by additional blow-ups along pairwise intersections of the components $E_i$.
 There appear new components whose non-singular parts (in the exceptional divisor) have zero Euler
 characteristics. It is not difficult to see that they don't contribute to the right hand side 
 of the equation~(\ref{ACampo}) below.
\end{remark}

Let $D=\bigcup_{i=1}^s E_i$ be the decomposition of the exceptional divisor $D$ into the irreducible
components and let ${\stackrel{\circ}{E}}_i$ be the nonsingular part of the component $E_i$ in the
total transform $\pi^{-1}(\{f=0\})$ of the zero-level set, i.e. the component $E_i$ itself minus 
its intersections with all other components of $\pi^{-1}(\{f=0\})$.
Let ${\stackrel{\circ}{D}}=\bigcup_{i=1}^s {\stackrel{\circ}{E}}_i$ be the nonsingular part
of the exceptional divisor $D$ in $\pi^{-1}(\{f=0\})$, let $\hat D={\stackrel{\circ}{D}}/G$
be the corresponding factor space, i.e. the space of orbits of the action of the group $G$ on
${\stackrel{\circ}{D}}$, and let $j:{\stackrel{\circ}{D}}\to \hat D$ be the factorization map. 

Let $\{\Xi\}$ be a stratification of the space $\hat D$ such that:
\begin{enumerate}
\item[1)] each stratum $\Xi$ is a connected complex analytic manifold and therefore
it is contained in the image of one of the components ${\stackrel{\circ}{E}}_i$ under 
the action of the map $j$;
\item[2)] over each stratum $\Xi$ the map $j$ is a covering.
\end{enumerate}

As above, for a point $x\in X$, let $G_x$ be its isotropy subgroup. The property 2) implies that the
isotropy subgroups of all the points of a connected component of the preimage $j^{-1}(\Xi)$ of a stratum
$\Xi$ coincide and the isotropy subgroups of all the points of the preimage $j^{-1}(\Xi)$ are conjugate
to each other in $G$. Let us denote the corresponding conjugacy class by ${\mathfrak{H}}_{\Xi}$.
For a stratum $\Xi$, let us take a point $x\in j^{-1}(\Xi)$. Let $m_x=m_{\Xi}$ be the multiplicity
of zero of the lifting $\widetilde{f}=f\circ\pi$ of the function $f$ to the space $X$ of the resolution
along the component of ${\stackrel{\circ}{D}}$ containing the point $x$. (It is also the multiplicity of
the function $\widetilde{f}$ at the point $x$.) The isotropy subgroup $G_x$ of the point $x$ acts on
the fibre over the point $x$ of the (one-dimensional) normal fibration to the non-singular part
${\stackrel{\circ}{D}}$ of the exceptional divisor. Let $H_x$ be the subgroup of the group $G_x$
consisting of the elements whose action on the fibre is trivial. Notice that $G_x\subset C_{H_x}$,
the factor $G_x/H_x$ is a cyclic group whose order $n_x=\vert G_x/H_x\vert=:n_{\Xi}$ divides the
multiplicity $m_x$. The action (a representation) of the group $G_x/H_x$ on the fibre defines a
homomorphism (even a monomorphism) of the group $G_x/H_x$ to $\C^{\star}={\rm GL}(1,\C)$. Let
$\alpha_x^{-1}$ be the element of this group which maps to $\exp\left({2\pi i}/{n_x}\right)$.
Let ${\hat{H}}_x={\hat{H}}(H_x, \frac{m_x}{n_x}, \alpha_x)$. It is not difficult to see that the
conjugacy class ${\hat{\mathfrak{H}}}(H_x, \frac{m_x}{n_x}, \alpha_x)$ of the subgroup
${\hat{H}}(H_x, \frac{m_x}{n_x}, \alpha_x)$ in $\Z\times G$ does not depend on the choice of a
point $x\in j^{-1}(\Xi)$. A representative of this class we shall denote by ${\hat{H}}_{\Xi}$.

\begin{theorem}\label{teoACampo} (the equivariant A'Campo formula)
\begin{equation}\label{ACampo}
 \zeta^G_f=\sum_{\Xi}\chi(\Xi)\cdot[(\Z\times G)/{\hat{H}}_{\Xi}]\,.
\end{equation}
\end{theorem}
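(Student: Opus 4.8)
The plan is to imitate the classical proof of the A'Campo formula, using the equivariant additivity (Proposition~\ref{multi}) and the computation in the "elementary" case (Proposition~\ref{elementary}) as the two workhorses. First I would recall that the Milnor fibre $M_f$ of the germ $f$ at the origin can be described, via the resolution $\pi$, in terms of the total transform $\pi^{-1}(\{f=0\})$: there is a $G$-equivariant deformation retraction of a suitable neighbourhood of $D=\pi^{-1}(0)$ in $X$, and the classical monodromy of $f$ is realized, up to $G$-equivariant homotopy, by a map built locally from the standard picture over the normal crossing divisor $\widetilde f=u_1^{m_1}\cdots u_n^{m_n}$. Hence $\zeta^G_f=\zeta^G_{\Phi}$ for a concrete $G$-equivariant map $\Phi$ of (a neighbourhood of $D$ inside) the Milnor fibre, and by Proposition~\ref{homotopy} we are free to choose the most convenient such model. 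This reduces the theorem to a statement purely about $\Phi$ on the ``thickened'' $\overset{\circ}{D}$.

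Next I would stratify. Over the non-singular part $\overset{\circ}{D}$, the lifted function $\widetilde f$ has the local form $u^{m}$ in the single normal direction, so the piece of the Milnor fibre lying over a point of $\overset{\circ}{D}$ is (homotopy equivalent to) $m$ points cyclically permuted by the monodromy, while over the deeper strata (intersections of two or more components $E_i$) the local model is $u_1^{m_1}u_2^{m_2}\cdots$ and the corresponding piece of the Milnor fibre has vanishing Euler characteristic. The standard A'Campo cancellation says precisely that the contribution of the part of $M_f$ lying over $D\setminus\overset{\circ}{D}$ is trivial; I would carry this over equivariantly by partitioning $M_f$ into the $G$-invariant pieces lying over the (connected, locally closed, $G$-invariant) strata and applying Proposition~\ref{multi} stratum by stratum. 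For a stratum inside $D\setminus\overset{\circ}{D}$ the relevant map is, up to $G$-homotopy and up to a trivial $(0,1)$-factor, a product of two or more non-trivial circle-type monodromies over it, whose fixed-point-free powers force (via Proposition~\ref{multi} applied to a further decomposition, or directly via Proposition~\ref{product} together with the Sebastiani--Thom-type identity) a zero contribution. This isolates the whole zeta function on the part of $M_f$ lying over $\overset{\circ}{D}$.

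It then remains to evaluate $\zeta^G_{\Phi}$ on the part lying over $\overset{\circ}{D}$. Here I would use the chosen stratification $\{\Xi\}$ of $\hat D=\overset{\circ}{D}/G$: over $j^{-1}(\Xi)$ the Milnor fibre restricts to an $m_\Xi$-fold ``unbranched'' cover along the normal direction, the monodromy acting as a cyclic permutation of these $m_\Xi$ sheets, while the isotropy group $G_x$ acts on the normal fibre through the cyclic quotient $G_x/H_x$ of order $n_\Xi\mid m_\Xi$, with the generator $\alpha_x^{-1}$ sending the normal coordinate by multiplication by $\exp(2\pi i/n_\Xi)$. This is exactly the situation of Proposition~\ref{elementary}: the space in question is of the single isotropy type $\mathfrak H_\Xi$, the first power of (a twist of) $\Phi$ returning a $G$-orbit to itself is $m_0=m_\Xi/n_\Xi$ combined with the element $g_0=\alpha_x$ (one has to check that the normalization of $\alpha_x$ as the element mapping to $e^{2\pi i/n_x}$ is consistent with the $(1,g_0)$-action bringing the first sheet back after $m_\Xi/n_\Xi$ steps), giving a contribution $\chi(\Xi)\cdot[(\Z\times G)/\widehat H(H_x,\tfrac{m_x}{n_x},\alpha_x)]=\chi(\Xi)\cdot[(\Z\times G)/\widehat H_\Xi]$. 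Summing over all strata $\Xi$ via Proposition~\ref{multi} yields the right-hand side of~(\ref{ACampo}).

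The main obstacle I expect is the bookkeeping in the last paragraph: correctly identifying the triple $(H_x,m_x/n_x,\alpha_x)$ that governs the $(\Z\times G)$-orbit structure over a stratum, in particular verifying that the monodromy (the $(1,e)$-action) composed with the appropriate group element really first closes up a $G$-orbit at the time/group-element pair prescribed by $\widehat H_\Xi$, and that this matches the $\alpha_x$ chosen through the normal-bundle representation. Everything else is a fairly mechanical combination of additivity, homotopy invariance, the product formula, and the elementary case; the classical A'Campo vanishing over the deep strata is needed only in its evident equivariant refinement, which Proposition~\ref{multi} supplies.
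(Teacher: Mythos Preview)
Your proposal is correct and follows essentially the same route as the paper's own proof: pass to the resolution, choose a $G$-equivariant model for the monodromy, use additivity (Proposition~\ref{multi}) to split the Milnor fibre into the pieces over the strata $\Xi$ of $\overset{\circ}{D}/G$ and over the deeper intersections, kill the latter via the circle-fibration argument, and then invoke Proposition~\ref{elementary} on each $\Xi$ to produce $\chi(\Xi)\cdot[(\Z\times G)/\hat H_\Xi]$. One small slip in your bookkeeping: the isotropy type of points of the Milnor fibre lying over $j^{-1}(\Xi)$ is the conjugacy class of $H_x$ (the subgroup acting trivially on the normal line), not $\mathfrak H_\Xi=[G_x]$; this is exactly the $H$ that enters $\hat H_\Xi=\hat H(H_x,m_x/n_x,\alpha_x)$, so your identification of the triple is right even though the label you attached to the isotropy type is off.
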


\begin{proof}
Since the resolution $\pi$ is an isomorphism outside of $f^{-1}(0)$, the map $\pi$ defines an
identification of the Milnor fibre $M_f$ with its preimage $\pi^{-1}(M_f)\subset X$. In the same way
as in the usual (non-equivariant) situation (see~\cite{Clemens, A'Campo}) one can construct a
$G$-equivariant deformation retraction of a neighbourhood of the total transform
$\pi^{-1}(f^{-1}(0))={\widetilde{f}}^{-1}(0)$ of the zero-level set $f^{-1}(0)$ onto the total
transform ${\widetilde{f}}^{-1}(0)$ itself such that, outside of a neighbourhood of the union
of all the intersections of the components of ${\widetilde{f}}^{-1}(0)$, i.e. where in local
coordinates ${\widetilde{f}}(u_1,\ldots, u_n)=u_1^m$, the retraction sends the point
$(u_1,u_2, \ldots, u_n)$ to the point $(0, u_2,\ldots, u_n)$ (in some coordinates).
One can also construct a $G$-equivariant monodromy transformation $\widetilde{\Gamma}$ on
$\pi^{-1}(M_f)$ commuting with the retraction and such that, in a neighbourhood of a point, where
${\widetilde{f}}(u_1,\ldots, u_m)=u_1^m$, it sends $(u_1,u_2, \ldots, u_n)\in \pi^{-1}(M_f)$ to
$(\exp{(2\pi i/m)}u_1, u_2,\ldots, u_n)$.
From Proposition~\ref{multi}, it follows that the equivariant zeta function
$\zeta^G_f=\zeta^G_{\widetilde{\Gamma}}$ is equal to the sum of the equivariant zeta functions
of the monodromy transformation on the preimages of the strata $\Xi$ and on neighbourhoods of the
intersections of the components of the total transform ${\widetilde{f}}^{-1}(0)$.

It is not difficult to show that the equivariant zeta function of the monodromy transformation on
a neighbourhood of an intersection of the components of ${\widetilde{f}}^{-1}(0)$ is equal to zero.
This can be deduced from the fact that, in such a neighbourhood, the Milnor fibre can be represented
as a fibration by circles coordinated with the group action and the monodromy transformation can be
constructed such that it preserves this fibration. 
 
The equivariant zeta function of the monodromy transformation $\widetilde{\Gamma}$ on the preimage
of a stratum $\Xi$ is given by Proposition~\ref{elementary}. Indeed, the intersection of the Milnor
fibre $\pi^{-1}(M_f)$ with the normal fibre to ${\stackrel{\circ}{D}}$ at a point from $\pi^{-1}(\Xi)$
consists of $m_{\Xi}$ points permuted by the monodromy transformation $\widetilde{\Gamma}$ cyclically.
The intersections with the orbits of the group $G$ divide these points into $n_{\Xi}$ groups with
$m_{\Xi}/n_{\Xi}$ points in each. (Here, of course, the term ``group'' does not have the same meaning as
at other places, but, to my surprise, I did not find an appropriate synonym.)
The monodromy transformation $\widetilde{\Gamma}$ permutes these groups cyclically. Therefore, for
$m=1, \ldots, (\frac{m_{\Xi}}{n_{\Xi}}-1)$, the $m$-th power of the monodromy transformation does not
preserve any $G$-orbit. In its turn the $\frac{m_{\Xi}}{n_{\Xi}}$-th power of the monodromy
transformation sends each point $y$ from the normal fibre at the point $x$ to the point $a^{-1}y$,
where $a\in\alpha_x$. Therefore the transformation $a\circ{\widetilde{\Gamma}}^{\frac{m_{\Xi}}{n_{\Xi}}}$
acts on the intersections of the Milnor fibre $\pi^{-1}(M_f)$ with the normal fibres to
${\stackrel{\circ}{D}}$ at all the point of $\pi^{-1}(\Xi)$ identically, what is required in
Proposition~\ref{elementary}. Therefore the equivariant zeta function under consideration is equal to
$\chi(\Xi)\cdot[(\Z\times G)/{\hat{H}}_{\Xi}]$, what proves the statement.
\end{proof}



\begin{thebibliography}{99}
\bibitem{AGV1} 
V.I.~Arnol'd, S.M.~Gusein-Zade, A.N.~Varchenko. Singularities of differentiable maps.
Vol. II. Monodromy and asymptotics of integrals. Monogr. Math. {\bf 83},
Birkh\"auser, Boston, MA, 1988.
\bibitem{A'Campo} N.~A'Campo. {\em La fonction z\^eta d'une monodromie.}
Comment. Math. Helv. {\bf 50} (1975), 233--248.
\bibitem{RMC} A.~Campillo, F.~Delgado, S.M.~Gusein-Zade.
Equivariant Poincar\'e series of filtrations.
To be published in Revista Matematica Complutense; arXiv: 1005.5656.
\bibitem{Clemens} C.H.~Clemens. Picard--Lefschetz theorem for families of nonsingular algebraic
varieties acquiring ordinary singularities. Trans. Amer. Math. Soc. {\bf 136} (1969), 93--108.
\bibitem{EG} W.~Ebeling, S.M.~Gusein-Zade.
Saito duality between Burnside rings for invertible polynomials.
To be published in Bull. London Math. Soc.; arXiv 1105.1964.
\bibitem{GLM2} S.M.~Gusein-Zade, I.~Luengo, A.~Melle-Hernandez.
On an equivariant version of the zeta function of a transformation.
Preprint arXiv: 1203:3344.
\bibitem{Le}
L{\^{e}} D\~{u}ng Tr{\'{a}}ng. Some remarks on relative monodromy.
In: Real and complex singularities (Proc. Ninth Nordic Summer
School/NAVF Sympos. Math., Oslo, 1976), 397--403. Sijthoff and
Noordhoff, Alphen aan den Rijn, 1977.
\bibitem{LR} W.~L\"uck, J.~Rosenberg. The equivariant Lefschetz fixed point theorem
for proper cocompact $G$-manifolds. In: High-dimensional manifold topology,  322--361,
World Sci. Publ., River Edge, NJ, 2003.
\bibitem{TtD} T.~tom Dieck. Transformation groups and representation theory.
Lecture Notes in Mathematics, 766, Springer, Berlin, 1979.
\bibitem{Wall} C.~T.~C.~Wall: A note on symmetry of singularities.
Bull. London Math. Soc. {\bf 12} (1980), 169--175.
\end{thebibliography}
\end{document}